%
%
%
%
%
%
\RequirePackage{fix-cm}
\documentclass[oneside, graybox]{svmult}
%
%


\usepackage{algorithm}
\usepackage[noend]{algpseudocode}
\usepackage{graphicx}
\usepackage{amsmath}
\usepackage{amssymb}
\newcommand{\real}{\mathbb{R}}
\usepackage{enumerate}

\usepackage{type1cm}        
%
\usepackage{makeidx}         
\usepackage{graphicx}        
\usepackage{multicol}        
\usepackage[bottom]{footmisc}

\usepackage{newtxtext}       %
\usepackage[varvw]{newtxmath}       


\makeindex             

%
%
%
%
%

\DeclareMathOperator*{\argmin}{arg\,min}


%
%


\newtheorem{assumption}{Assumption}
\newcounter{algo}
{}

\begin{document}

\title*{On Implicit Concave Structures in Half-Quadratic Methods for Signal Reconstruction}

\author{Vittorio Latorre  \orcidID{0000-0002-4644-848X}   
}
\institute{Vittorio Latorre \at Department of Bioscience and Territory, University of Molise, c.da Fonte Lappone, snc, Pesche (IS), 86170, Molise, Italy, \email{vittorio.latorre@unimol.it}}

\maketitle

\abstract{
In this work, we introduce a new class of non-convex functions, called implicit concave functions, which are compositions of a concave function with a continuously differentiable mapping. We analyze the properties of their minimization by leveraging Fenchel conjugate theory to construct an augmented optimization problem. This reformulation yields a one-to-one correspondence between the stationary points and local minima of the original and augmented problems. Crucially, the augmented problem admits a natural variable splitting that reveals convexity with respect to at least one block, and, in some cases, leading to a biconvex structure that is more amenable to optimization. This enables the use of efficient block coordinate descent algorithms for solving otherwise non-convex problems. As a representative application, we show how this framework applies to half-quadratic regularization in signal reconstruction and image processing. We demonstrate that common edge-preserving regularizers fall within the proposed class, and that their corresponding augmented problems are biconvex and bounded from below. Our results offer both a theoretical foundation and a practical pathway for solving a broad class of structured non-convex problems.}

\section{Introduction}\label{Introduction}
Given a concave, continuous, and differentiable function $V:\mathbb{R}^m \rightarrow \overline{\mathbb{R}}$, where $\overline{\mathbb{R}} = [-\infty, \infty]$ denotes the extended real number line, and a continuous and differentiable function $\Phi:\mathbb{R}^n \rightarrow \mathbb{R}^m$, we are interested in studying the properties of the composition of $V$ with $\Phi$:
\begin{equation}\label{eq: icf}
f(x) = (V \circ \Phi)(x) = V(\Phi(x)).
\end{equation}
We refer to this class of functions as \emph{implicit concave functions}.

Our interest in this class of functions stems from the theoretical results presented in \cite{l22}, where a similar class—composed of a convex and continuous function—is introduced and used to formulate a dual problem analogous to the well-known Lagrangian dual in constrained optimization. Related theoretical developments are also proposed in \cite{snl19,ls14}, leading to the design of efficient numerical methods.
Although concavity is not as favorable as convexity from an optimization standpoint \cite{conc95}, it remains useful for modeling a wide range of engineering applications \cite{conc00,conc98} and can be exploited through tailored, efficient algorithms \cite{conc18}. 
Motivated by the promising results in \cite{l22}, we investigate the problem of minimizing an implicit concave function.

We show that the properties of implicit concave functions enable the formulation of an augmented function for which a one-to-one correspondence can be established between the stationary points of the original function and those of the augmented function. Furthermore, we prove that if a point is a local minimum of the original function—satisfying either the second-order necessary or sufficient optimality conditions—then the corresponding point in the augmented formulation is also a local minimum, satisfying the equivalent conditions. As a result, the minimizations of the original and augmented problems are practically equivalent, in the sense that the minimum of one function corresponds directly to the minimum of the other.
   
The augmented problem simplifies the minimization with respect to the original variables by introducing a new set of augmented variables, for which the problem is convex. This structure generally makes the minimization easier than the original formulation. 
In some cases, it is even possible to identify a subset of the augmented variable space where the augmented function is \emph{biconvex} \cite{bicon}, meaning that it is convex in the original variables when the augmented variables are fixed, and vice versa.
The natural partition of variables into two blocks—one consisting of the original variables and the other of the augmented variables—combined with the convexity of the augmented function with respect to the second block, makes this formulation particularly well-suited for block coordinate descent algorithms \cite{gs99}, such as the well-known nonlinear Gauss-Seidel method \cite{ber03}.

The properties of implicit concave functions have already been utilized in the literature—particularly in the context of half-quadratic regularization \cite{ge92,nik07,te98,hq21} for edge-preserving functions in signal reconstruction and image recognition \cite{ep_sur}. 
In these applications, the original signal or image is reconstructed from noise-corrupted data using edge-preserving functions \cite{te98}, which are designed to reduce noise without blurring or destroying important features such as edges or sharp transitions. These techniques have also been applied more recently in the context of deep learning for image recognition \cite{unr241,unr24}.
In this work, we briefly introduce the formulation of edge-preserving functions, demonstrate that they fall within the class of implicit concave functions, and show that the corresponding augmented problem is biconvex over its entire domain and bounded from below.

Our main contributions are therefore:
\begin{itemize}
\item An analysis of the minimization problem for implicit concave functions, and the introduction of an augmented problem whose minimization is proven to be equivalent to that of the original function;
\item An analysis of the half-quadratic regularization of edge-preserving functions, providing new insights into this problem and showing that the regularization is indeed biconvex and bounded.
\end{itemize}

The paper is organized as follows: In the next section, we present the general properties of implicit concave functions, define the associated augmented function, and prove the correspondence between the minimizers of the original and augmented functions. We also introduce a simple condition under which the augmented function is bounded from below. In Section 3, we apply implicit concave functions to edge-preserving regularization in signal processing and image recognition, showing that the augmented function is biconvex over its domain and bounded from below. Finally, in Section 4, we present our conclusions.


\section{Proprieties of Implicit Concave Functions}\label{prop}
\noindent
We start with the formal definition of an Implicit Concave Function:
\begin{definition}\label{def: start}
Let $V: \mathbb{R}^m \rightarrow \mathbb{R}$ be a continuous, differentiable, and stricly concave function, and let $\Phi: C \subseteq \mathbb{R}^n \rightarrow \mathbb{R}^m$ be a continuous and differentiable function. We say that $f: C \subseteq \mathbb{R}^n \rightarrow \mathbb{R}$ is an Implicit Concave Function if $f(x)$ can be expressed as:
$$
f(x) = (V \circ \Phi)(x) = V(\Phi(x)).
$$
\end{definition}
Given the presence of the concave function $V: \mathbb{R}^m \rightarrow \mathbb{R}$, we consider its conjugate function $V^*: \mathbb{R}^m \rightarrow \mathbb{R}$ defined as:
$$
V^*(\sigma)= \inf_{y\in\real^m} (\langle y,\sigma\rangle - V(y)).
$$
As $V$ is concave, also $V^*$ is concave, and it is possible to show that the Fenchel  inequality \cite{rock09} holds for implicit concave functions and their conjugates as follows:
\begin{theorem}\label{th: fen}
Let $f: C \rightarrow \overline{\mathbb{R}}$, with $f(x) = V(\Phi(x))$, be an implicit concave function according to Definition \ref{def: start}. Then, the following inequality always holds for any $x \in C$ and $\sigma \in D$:
\begin{equation}\label{eq: fen_in}
f(x) = V(\Phi(x)) \le \langle \Phi(x), \sigma \rangle - V^*(\sigma),
\end{equation}
where $V^*: \mathbb{R}^m \rightarrow \overline{\mathbb{R}}$ is the conjugate of $V$ and is therefore concave in $\sigma$, $\sigma \in D \subseteq \mathbb{R}^m$ is the vector of augmented variables, and $D := \mathcal{R}(\nabla V)$ is the space of augmented variables.
\end{theorem}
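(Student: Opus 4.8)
The plan is to recognize the claimed inequality \eqref{eq: fen_in} as nothing more than the Fenchel--Young inequality for the concave function $V$, transported through the map $\Phi$ by a single substitution. The entire statement should follow directly from the definition of the conjugate $V^*(\sigma) = \inf_{y\in\real^m}(\langle y,\sigma\rangle - V(y))$, with no appeal to concavity beyond what is needed to guarantee that $V^*$ is well defined and finite on $D$. I would therefore not try to prove anything about $\Phi$ as a composition; the role of $\Phi$ is purely to supply a particular point of $\real^m$ at which to evaluate the underlying scalar inequality.

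The key steps, in order, are the following. First I would fix an arbitrary $\sigma \in D$ and recall that, by definition, $V^*(\sigma)$ is an infimum taken over all $y \in \real^m$. Consequently, for every single $y$ the infimand cannot fall below the infimum, giving the pointwise bound
$$
V^*(\sigma) \le \langle y, \sigma\rangle - V(y) \quad \text{for all } y \in \real^m.
$$
Rearranging this yields the concave Fenchel inequality $V(y) \le \langle y,\sigma\rangle - V^*(\sigma)$, valid for every $y$. The second step is to observe that for any $x \in C$ the point $y := \Phi(x)$ lies in $\real^m$, so the inequality applies at this specific choice. Substituting $y = \Phi(x)$ then gives exactly $f(x) = V(\Phi(x)) \le \langle \Phi(x),\sigma\rangle - V^*(\sigma)$, which is \eqref{eq: fen_in}.

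The honest assessment is that there is essentially no hard part here: the result is a direct, almost tautological, consequence of reading the infimum in the definition of $V^*$ as a lower bound that is beaten by each individual $y$. The only points requiring a word of care are technical rather than conceptual. One should confirm that the extended-real arithmetic behaves: if $V^*(\sigma) = -\infty$ the right-hand side is $+\infty$ and the inequality is vacuous, so restricting to $\sigma \in D := \mathcal{R}(\nabla V)$ is what secures a finite, non-trivial bound. Indeed, for a differentiable concave $V$ every $\sigma$ in the range of $\nabla V$ is attained as $\sigma = \nabla V(y_0)$ for some $y_0$, which guarantees $V^*(\sigma)$ is finite (and in fact attained at $y_0$), so the inequality is meaningful precisely on $D$. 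I would close by emphasizing that the inequality holds for all $x \in C$ and all $\sigma \in D$ independently, since $x$ and $\sigma$ entered the argument as free parameters, which is exactly the ``always holds'' phrasing of the statement.
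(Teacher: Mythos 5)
Your proposal is correct and follows essentially the same route as the paper's proof: invoke the concave Fenchel inequality $V(y) \le \langle y,\sigma\rangle - V^*(\sigma)$ for all $y$, then substitute $y = \Phi(x)$. Your version is slightly more self-contained in that it derives the Fenchel inequality directly from the infimum defining $V^*$ and adds a useful remark on why restricting to $\sigma \in D$ keeps the bound finite, but these are refinements of the same argument rather than a different approach.
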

\begin{proof}
We consider the concave function $V: \mathbb{R}^m \rightarrow \mathbb{R}$ and its concave conjugate:
\begin{equation}\label{eq: fen_con}
V^*(\sigma) = \inf_{y} \left\{ \langle y, \sigma \rangle - V(y) \right\},
\end{equation}
where $y$ is a point in $\mathbb{R}^m$.
From the well-known Fenchel inequality in convex programming \cite{rock09}, we have:
\begin{equation}\label{eq: fen_in_nor}
V(y) \le \langle y, \sigma \rangle - V^*(\sigma), \quad \forall y \in \mathbb{R}^m, \, \forall \sigma \in D,
\end{equation}
which holds for any $y \in \mathbb{R}^m$ and $\sigma \in D$.

Since the inequality in (\ref{eq: fen_in_nor}) holds for any $y \in \mathbb{R}^m$, it also holds when $y$ is the result of applying the mapping $\Phi$ to a point $x \in C \subseteq \mathbb{R}^n$, i.e., when $y = \Phi(x)$. Therefore, for the function $f(x)$, the Fenchel inequality holds as follows:
$$
f(x) = V(\Phi(x)) \le \langle \Phi(x), \sigma \rangle - V^*(\sigma), \quad \forall x \in C, \, \forall \sigma \in D.
$$
This completes the proof of the theorem.
\end{proof}
We consider the optimization problem:
\begin{equation}\label{eq: prim}
\min_x f(x).
\end{equation}
From the relation in Theorem \ref{th: fen}, we can define the \emph{Augmented Function} associated with Problem (\ref{eq: prim}) as the right-hand side of (\ref{eq: fen_in}):
\begin{equation}\label{eq: aug}
L(x, \sigma) = \langle \Phi(x), \sigma \rangle - V^*(\sigma).
\end{equation}
It is easy to observe that $L$ is convex in the augmented variable $\sigma$ because $V^*$ is concave. Furthermore, the function $L$ has the property that there is a one-to-one correspondence between its stationary points and those of $f$, as stated in the following theorem:
\begin{theorem}\label{th: corr}
let $f(x)$ be an  implicit concave function and $L(x,\sigma)$ be its associated augmented function as in (\ref{eq: aug}). 
Then the  point $(\bar{x},\bar{\sigma})\in C\times D$ is a stationary point of $L(\cdot)$ if and only if $\bar{x}$, is a stationary point of $f(\cdot)$, 
and $\bar\sigma$ is the vector of augmented variables such that $\bar\sigma=\argmin_{\sigma\in D}\{\langle \Phi(\bar{x}),\sigma \rangle -V^*(\sigma) \}$,
 and
\begin{equation}\label{eq: valcorr}
f(\bar{x})=L(\bar{x},\bar{\sigma}).
\end{equation}
\end{theorem}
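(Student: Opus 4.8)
The plan is to reduce stationarity of $L$ to a pair of block conditions and then match them to stationarity of $f$ through the chain rule together with the gradient correspondence between $V$ and its conjugate $V^*$. First I would compute the two blocks of partial derivatives of the augmented function in (\ref{eq: aug}): since $\langle\Phi(x),\sigma\rangle$ is linear in $\sigma$ and bilinearly coupled in $x$, one gets $\nabla_\sigma L(x,\sigma)=\Phi(x)-\nabla V^*(\sigma)$ and $\nabla_x L(x,\sigma)=\nabla\Phi(x)\trt\sigma$, while the chain rule gives $\nabla f(x)=\nabla\Phi(x)\trt\nabla V(\Phi(x))$. Thus $(\bar x,\bar\sigma)\in C\times D$ is stationary for $L$ precisely when the two equations $\Phi(\bar x)=\nabla V^*(\bar\sigma)$ and $\nabla\Phi(\bar x)\trt\bar\sigma=0$ hold simultaneously.

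The crux of the argument is the gradient correspondence supplied by Fenchel duality. Because $V$ is strictly concave and differentiable, $\nabla V$ is injective, and on $D=\mathcal{R}(\nabla V)$ the conjugate $V^*$ is differentiable with $\nabla V^*=(\nabla V)^{-1}$; consequently the relation $\Phi(\bar x)=\nabla V^*(\bar\sigma)$ is equivalent to $\bar\sigma=\nabla V(\Phi(\bar x))$. This same relation is exactly the condition under which the Fenchel inequality (\ref{eq: fen_in}) is attained with equality, which will immediately deliver the value identity (\ref{eq: valcorr}) once $\bar\sigma=\nabla V(\Phi(\bar x))$ is established. I would state this gradient-inversion fact explicitly, as it is the single ingredient that ties the $\sigma$-block of $L$ to the argument of $V$ inside $f$.

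With these pieces in place both implications are short. For the forward direction, stationarity of $L$ yields $\bar\sigma=\nabla V(\Phi(\bar x))$ from the $\sigma$-block; substituting into the $x$-block gives $\nabla\Phi(\bar x)\trt\nabla V(\Phi(\bar x))=\nabla f(\bar x)=0$, so $\bar x$ is stationary for $f$. Since $L(\bar x,\cdot)$ is convex in $\sigma$ (as $-V^*$ is convex), the vanishing $\sigma$-gradient certifies that $\bar\sigma$ is the global minimizer, i.e.\ $\bar\sigma=\argmin_{\sigma\in D}\{\langle\Phi(\bar x),\sigma\rangle-V^*(\sigma)\}$, and the equality case of (\ref{eq: fen_in}) gives (\ref{eq: valcorr}). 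For the converse, given $\nabla f(\bar x)=0$ I would set $\bar\sigma:=\nabla V(\Phi(\bar x))\in D$; then the $\sigma$-block vanishes by the gradient correspondence, and the $x$-block equals $\nabla f(\bar x)=0$, so $(\bar x,\bar\sigma)$ is stationary for $L$.

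The main obstacle I anticipate is the rigorous justification of the gradient correspondence and the differentiability of $V^*$ on $D$. This relies on $V$ being both strictly concave (so $\nabla V$ is injective and $\nabla V^*$ is single-valued) and differentiable in the essentially smooth sense, and it requires treating $D=\mathcal{R}(\nabla V)$ as the effective domain of $V^*$ so that $\nabla V^*$ is well defined wherever it is invoked. Care is also needed so that the constrained notation $\argmin_{\sigma\in D}$ is read as optimality of the convex problem $\min_{\sigma}L(\bar x,\sigma)$, whose interior stationarity is equivalent to global optimality; everything else in the proof is routine substitution.
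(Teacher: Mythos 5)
Your proposal is correct and follows essentially the same route as the paper's proof: compute the two block gradients of $L$, use the conjugate gradient correspondence $\nabla V^* = (\nabla V)^{-1}$ to translate the $\sigma$-block condition into $\bar\sigma=\nabla V(\Phi(\bar x))$, substitute into the $x$-block via the chain rule, and obtain (\ref{eq: valcorr}) from the equality case of the Fenchel inequality. Your added attention to why $\nabla V^*$ is single-valued on $D=\mathcal{R}(\nabla V)$ and to the equivalence between the vanishing $\sigma$-gradient and the $\argmin$ characterization is a welcome tightening of details the paper leaves implicit, but it does not change the argument.
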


\begin{proof}
Let $(\bar{x},\bar{\sigma})$ be a stationary point of the Lagrangian function. 
By looking at its derivatives we obtain:
\begin{equation}\label{eq: ld}
\begin{array}{ccccc}
\nabla_x L(\bar{x},\bar{\sigma})&=&\nabla \Phi(\bar{x}) \bar{\sigma} &=&0\\
\nabla_\sigma L(\bar{x},\bar{\sigma})&=& \Phi(\bar{x})- \nabla V^*( \bar{\sigma}) &=&0\\
\end{array},
\end{equation}
From the properties of convex conjugate functions, we have $\nabla V(\cdot)^{-1}=\nabla V^*(\cdot)$, and the second equation of the (\ref{eq: ld}) becomes:
$$
\bar\sigma=\nabla V(\Phi(\bar x)),
$$
by plugging this value back into the first equation of the (\ref{eq: ld}), we obtain:
$$
\nabla \Phi(\bar{x}) \nabla V(\Phi(\bar{x}) )=0.
$$
by the fact that $f(x)=V(\Phi(x))$ and for the chain derivation rule, we have:
$$
\nabla f(\bar x)= \nabla \Phi(\bar{x}) \nabla V(\Phi(\bar{x}) )=0.
$$
That is $\bar{x}$ is a stationary point of $f(x)$.

For the sufficiency, let $\bar{x}$ be a stationary point of $f(x)$, and let $\bar{\sigma}$ be the point in the augmented space such that
$$
\bar{\sigma} = \arg \min_{\sigma \in D} \left\{ \langle \Phi(\bar{x}), \sigma \rangle - V^*(\sigma) \right\}.
$$
The point $\bar{\sigma}$ is well-defined because of the concavity properties of $V^*(\sigma)$, and it satisfies the first-order conditions of $L(\cdot)$ for the variable $\sigma$:
$$
\nabla_\sigma L(\bar{x}, \bar{\sigma}) = \Phi(\bar{x}) - \nabla V^*(\bar{\sigma}) = 0,
$$
and by the properties of the convex conjugate functions, we also have:
\begin{equation}\label{eq: temp}
\bar{\sigma} = \nabla V(\Phi(\bar{x})).
\end{equation}
The first-order conditions of $L(\cdot)$ for the variable $x$ are:
\begin{equation}\label{eq: lx}
\nabla_x L(x, \sigma) = \nabla \Phi(x) \sigma.
\end{equation}
By substituting (\ref{eq: temp}) into (\ref{eq: lx}), we get:
$$
\nabla_x L(\bar{x}, \bar{\sigma}) = \nabla \Phi(\bar{x}) \bar{\sigma} = \nabla \Phi(\bar{x}) \nabla V(\Phi(\bar{x})) = \nabla f(\bar{x}) = 0.
$$
Therefore, the point $(\bar{x}, \bar{\sigma})$ is a stationary point for $L(x, \sigma)$.

Finally, equation (\ref{eq: valcorr}) follows from the equality in (\ref{eq: fen_in}), which is satisfied when $\sigma$ is the minimum of $L(\bar{x}, \sigma)$ for any fixed $\bar{x}$:
$$
f(\bar{x}) = V(\Phi(\bar{x})) = \langle \Phi(\bar{x}), \bar{\sigma} \rangle - V^*(\bar{\sigma}) = L(\bar{x}, \bar{\sigma}).
$$
This proves the theorem.
\end{proof}

From Theorem~\ref{th: corr}, it follows that finding a stationary point of the augmented function \( L \) also yields a stationary point of the original function \( f \). However, this raises a natural question regarding the relationship between corresponding stationary points: in particular, we are interested in whether a stationary point of \( L \) corresponds to a local minimum of \( f \), and vice versa. 
Specifically, we analyze the second-order properties of the two functions. 
We show that if a stationary point \( \bar{x} \) of the original function \( f \) satisfies the second-order necessary conditions (SONC)—namely, the Hessian matrix is positive semidefinite—or the second-order sufficient conditions (SOSC)—namely, the Hessian is positive definite—for a local minimum, then the corresponding stationary point \( (\bar{x}, \bar{\sigma}) \) of the augmented function \( L \) also satisfies the respective SONC or SOSC.
This result is established by analyzing the second-order derivatives, as formalized in the following theorem.
\begin{theorem}\label{the: min}
Let $f(x)=V(\Phi(x))$ be an implicit concave function with $V$ twice differentiable, and let $L(x,\sigma)$ be its associated augmented function as in (\ref{eq: aug}). Then the point $\bar{x}$, with $\nabla^2 V(\Phi(\bar{x}))$ non-singular, is such that $\nabla^2 f(\bar{x})\succeq 0$ (resp. $\nabla^2 f(\bar{x})\succ 0$) if and only if its corresponding stationary point $(\bar{x},\bar{\sigma})$ is such that $\nabla^2 L(\bar{x},\bar{\sigma})\succeq 0$ (resp. $\nabla^2 L(\bar{x},\bar{\sigma})\succ 0$).
\end{theorem}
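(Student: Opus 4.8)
The plan is to show that the Hessian of $L$ at the stationary point $(\bar x,\bar\sigma)$ is a symmetric block matrix whose Schur complement with respect to the $\sigma$-block is exactly $\nabla^2 f(\bar x)$; since that $\sigma$-block will turn out to be positive definite, the standard Schur-complement criterion transfers both the semidefinite and the definite properties at once. First I would compute the two Hessians explicitly. Writing $J:=\nabla\Phi(\bar x)$ (the $n\times m$ matrix whose columns are the gradients $\nabla\Phi_k(\bar x)$), $A:=\nabla^2 V(\Phi(\bar x))$, and $H:=\sum_{k=1}^m \bar\sigma_k\,\nabla^2\Phi_k(\bar x)$, applying the chain rule twice to $f(x)=V(\Phi(x))$ gives
\begin{equation*}
\nabla^2 f(\bar x)=\sum_{k=1}^m [\nabla V(\Phi(\bar x))]_k\,\nabla^2\Phi_k(\bar x)+J\,A\,J^{\top}=H+J\,A\,J^{\top},
\end{equation*}
where I used $\bar\sigma=\nabla V(\Phi(\bar x))$ from Theorem~\ref{th: corr} to identify the first summand with $H$. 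For $L(x,\sigma)=\langle\Phi(x),\sigma\rangle-V^*(\sigma)$, differentiating $\nabla_x L=\nabla\Phi(x)\sigma$ and $\nabla_\sigma L=\Phi(x)-\nabla V^*(\sigma)$ yields the blocks $\nabla^2_{xx}L=H$, $\nabla^2_{x\sigma}L=J$, and $\nabla^2_{\sigma\sigma}L=-\nabla^2 V^*(\bar\sigma)$, so that
\begin{equation*}
\nabla^2 L(\bar x,\bar\sigma)=\begin{pmatrix} H & J\\ J^{\top} & -\nabla^2 V^*(\bar\sigma)\end{pmatrix}.
\end{equation*}

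Next I would pin down the $\sigma$-block. Since $\nabla V^*=(\nabla V)^{-1}$ (already used in the proof of Theorem~\ref{th: corr}), differentiating the identity $\nabla V^*(\nabla V(y))=y$ gives $\nabla^2 V^*(\bar\sigma)=A^{-1}$, which is well defined \emph{precisely} because of the hypothesis that $A=\nabla^2 V(\Phi(\bar x))$ is non-singular. Strict concavity of $V$ forces $A\preceq 0$, and together with non-singularity this upgrades to $A\prec 0$; hence $-\nabla^2 V^*(\bar\sigma)=-A^{-1}=(-A)^{-1}\succ 0$. This positive definiteness of the bottom-right block is the structural fact that makes the Schur-complement argument applicable.

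Finally I would invoke the Schur-complement criterion: for a symmetric matrix $\begin{pmatrix} H & J\\ J^{\top} & E\end{pmatrix}$ with $E\succ 0$, the full matrix is positive semidefinite (resp. definite) if and only if the Schur complement $H-J E^{-1} J^{\top}$ is positive semidefinite (resp. definite). Taking $E=-A^{-1}=(-A)^{-1}$, so that $E^{-1}=-A$, the Schur complement becomes $H-J(-A)J^{\top}=H+J A J^{\top}=\nabla^2 f(\bar x)$. Therefore $\nabla^2 L(\bar x,\bar\sigma)\succeq 0\iff\nabla^2 f(\bar x)\succeq 0$, and likewise for the strict case, which is exactly the assertion.

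The main obstacle I anticipate is bookkeeping rather than conceptual: one must verify that the curvature term $H=\sum_k\bar\sigma_k\nabla^2\Phi_k(\bar x)$ is genuinely the common $(x,x)$-block of both Hessians, which hinges on the stationarity identity $\bar\sigma=\nabla V(\Phi(\bar x))$, and one must track the sign of $\nabla^2 V^*$ and the matrix-inverse relation carefully so that the Schur complement reproduces $\nabla^2 f$ with the correct sign. The non-singularity hypothesis is doing double duty here, guaranteeing both that $\nabla^2 V^*(\bar\sigma)=A^{-1}$ exists and that the $\sigma$-block is strictly positive definite, so I would flag that assumption as the indispensable ingredient of the argument.
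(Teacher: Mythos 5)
Your proof is correct and follows essentially the same route as the paper: both compute the two Hessians, use $\bar\sigma=\nabla V(\Phi(\bar x))$ and $\nabla^2 V^*(\bar\sigma)=\nabla^2 V(\Phi(\bar x))^{-1}$ to identify the blocks of $\nabla^2 L(\bar x,\bar\sigma)$, and then apply the Schur-complement criterion with respect to the positive definite $\sigma$-block, whose Schur complement is exactly $\nabla^2 f(\bar x)$. If anything, your write-up is slightly more careful than the paper's in spelling out why non-singularity plus concavity upgrades $\nabla^2 V(\Phi(\bar x))\preceq 0$ to $\prec 0$, which is what legitimizes the Schur-complement step.
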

\begin{proof}
We report the proof for the case of a positive semidefinite Hessian matrix. The proof for the positive definite case follows the same reasoning.
The Hessians of $f$ and $L$ are:
\begin{subequations}\label{eq: h}
\begin{align}
\nabla^2 f(x) &= \nabla V(\Phi(x)) \nabla^2 \Phi(x)  +\nabla \Phi(x)^T \nabla^2 V(\Phi(x)) \nabla \Phi(x) \label{eq: hf},\\
\nabla^2 L(x,\sigma) &= 
\left(\begin{array}{ll}
\sigma \nabla^2 \Phi(x) & \nabla \Phi(x)\\
\nabla \Phi(x)^T & - \nabla^2 V^*(\sigma) 
\end{array}\right) \label{eq: hl}.
\end{align}
\end{subequations}
Let $\bar{x}$ be a stationary point of $f$ such that $\nabla^2 f(\bar{x}) \succeq 0$. For this point, we have that $- \nabla^2 V(\Phi(\bar{x}))^{-1} \succ 0$, since $V$ is concave. By the Schur complement, these two conditions are verified if and only if the following matrix is also positive semidefinite:
\begin{equation}\label{eq: schur1}
\left(\begin{array}{ll}
\nabla V(\Phi(\bar{x})) \nabla^2 \Phi(\bar{x}) & \nabla \Phi(\bar{x})\\
\nabla \Phi(\bar{x})^T & - \nabla^2 V(\Phi(\bar{x}))^{-1} 
\end{array}\right) \succeq 0.
\end{equation}
Since $(\bar{x}, \bar\sigma)$, with $\bar\sigma = \argmin_{\sigma \in D} \{ \langle \Phi(\bar{x}), \sigma \rangle - V^*(\sigma) \}$, is the stationary point of $L$ corresponding to $\bar{x}$, and by the properties of convex conjugate functions and their derivatives applied through the inverse function theorem \cite{gorni91}, we know that for the point $\bar{y} \in \real^m$, with $\bar{y} = \Phi(\bar{x})$, we have:
\[
\begin{array}{c}
\bar\sigma = \nabla V(\bar{y}) \vspace{0.5em} \\
\nabla^2 V^*(\bar\sigma) = \nabla^2 V(\bar{y})^{-1}
\end{array},
\]
therefore, (\ref{eq: schur1}) is equivalent to:
\[
\left(\begin{array}{ll}
\bar\sigma \nabla^2 \Phi(\bar{x}) & \nabla \Phi(\bar{x})\\
\nabla \Phi(\bar{x})^T & - \nabla^2 V^*(\bar\sigma) 
\end{array}\right) \succeq 0,
\]
which is exactly the Hessian of function $L$ in (\ref{eq: hl}) evaluated at the point $(\bar{x}, \bar\sigma)$. Therefore, $\nabla^2 f(\bar{x}) \succeq 0$ if and only if:
\[
\nabla^2 L(\bar{x}, \bar\sigma) \succeq 0.
\]
\end{proof}

Consequently, finding a point that satisfies the SONC or the SOSC for a minimum of $L$ is equivalent to finding a point that satisfies the same conditions for $f$. In other words, the problem of finding a minimum point of $f$ is equivalent to the problem of finding a minimum point of $L$, with the advantage that $L$ is convex in the block of augmented variables $\sigma$.
This structural property enables the use of block coordinate descent methods, which are particularly attractive in large-scale applications. In fact, convergence to a local minimum is guaranteed for some classes of block coordinate descent algorithms even when the overall problem is non-convex, as long as it involves only two variable blocks \cite{gs99}. 
Furthermore, it is easy to prove that if $f$ is bounded below, then $L$ is also bounded below:

\begin{theorem}\label{th: bound}
Let $f(x)$ be an implicit concave function, and 
suppose that $f(x)$ is bounded below; that is, there exists a scalar $\alpha$ such that $\alpha \le f(x)$ for all $x \in C$. Then its associated augmented function $L(x,\sigma)$ is also bounded below.
\end{theorem}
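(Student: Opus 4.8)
The plan is to read off the lower bound on $L$ directly from the Fenchel inequality already established in Theorem~\ref{th: fen}, without performing any new computation on the conjugate $V^*$ or its range $D$. The essential observation is that inequality~(\ref{eq: fen_in}) is precisely the statement that the augmented function dominates the original function pointwise: for every $x \in C$ and every $\sigma \in D$ one has
\[
f(x) = V(\Phi(x)) \le \langle \Phi(x), \sigma \rangle - V^*(\sigma) = L(x,\sigma).
\]

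First I would invoke the hypothesis that $f$ is bounded below, so that $\alpha \le f(x)$ for all $x \in C$. Chaining this with the pointwise domination above yields, for any $(x,\sigma) \in C \times D$,
\[
\alpha \le f(x) \le L(x,\sigma),
\]
so that the same scalar $\alpha$ serves as a lower bound for $L$ over the entire product domain $C \times D$. This establishes the claim directly, and in fact with the identical constant.

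Since the argument is essentially a one-line consequence of Theorem~\ref{th: fen}, I do not expect any substantive obstacle; the result is a corollary of the Fenchel inequality rather than a theorem requiring new machinery. The only point meriting attention is that~(\ref{eq: fen_in}) is asserted to hold for \emph{all} $\sigma \in D$, so the bound transfers uniformly across the augmented block and not merely at the optimal $\bar\sigma$ identified in Theorem~\ref{th: corr}. Consequently the boundedness of $L$ is inherited verbatim from that of $f$, and no assumption beyond the existence of $\alpha$ is needed.
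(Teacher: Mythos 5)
Your argument is correct and coincides exactly with the paper's own proof: both chain the hypothesis $\alpha \le f(x)$ with the Fenchel inequality of Theorem~\ref{th: fen} to obtain $\alpha \le f(x) \le L(x,\sigma)$ for all $(x,\sigma) \in C \times D$. Nothing further is needed.
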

\begin{proof}
From inequality (\ref{eq: fen_in}) in Theorem \ref{th: fen} and the definition of $L$ in (\ref{eq: aug}), we have:
$$
\alpha \le f(x) = V(\Phi(x)) \le \langle \Phi(x),\sigma \rangle - V^*(\sigma) = L(x,\sigma), \quad \forall x \in C, \,\, \forall \sigma \in D.
$$
This proves the theorem.
\end{proof}

Therefore, if $f$ satisfies standard boundedness properties that ensure the existence of a minimum — for instance, if $f$ has compact level sets, then $L$ enjoys the same boundedness as well. This last theorem assures us that the problem of minimizing the augmented function $L$ is well posed, in the sense that $L$ is bounded below. This is a fundamental property in optimization, as it guarantees that a minimum exists and prevents the algorithm from diverging, making the reformulated problem both theoretically sound and practically solvable.

\section{Half Quadratic Regularization of Edge Preserving Functions in Image Recognition}\label{halfquad}
One of the most relevant domains where implicit concave functions naturally arise is in inverse problems involving noisy or incomplete data, specifically in signal processing, where the available data $b \in \real^r$ is obtained from an original unknown signal or image ${z}$. In many applications, this unknown data is related to the observed data through a linear system and some noise, as follows:
$$
b = A x + \eta,
$$
where the matrix $A \in \real^{r \times n}$ represents the linear transformation operated by the system, and $\eta$ is some Gaussian noise.

Due to the presence of noise, finding the minimum of the function $\| A z - b \|^2$ is an ill-posed problem, and regularization is necessary. Thus, the technique used to find a good approximation of the vector $z$ involves a data fidelity term and a regularization term, leading to the following optimization problem:
\begin{equation}\label{eq: primep}
\min_z f_{ep}(x) = \| A x - b \|^2 + \beta \sum_{i=1}^m \psi (\| G_i x \|),
\end{equation}
which consists of a quadratic data-fidelity term and a regularization term characterized by a regularization function $\psi: \real \to \real$. The argument of $\psi$ involves a series of linear operators $G_i : \real^n \to \real^s$ for $i = 1, \dots, m$ and $s \ge 1$, with $\beta > 0$ being a given regularization parameter.
In general \cite{det97}, the following assumptions are made to ensure the suitability of $\psi$ as a regularization function:
\begin{assumption}\label{ass: gen}
$\;$ \newline \vspace{-.3cm}
\begin{enumerate}
\item $\psi(t) \geq 0 \quad \forall t$ with $\psi(0) = 0$.
\item $\psi(t) = \psi(-t)$.
\item $\psi \in C^1$.
\end{enumerate}
\end{assumption}
Using a convex function $\psi$ in (\ref{eq: primep}) would guarantee the uniqueness of the minimum point \cite{cha97,ve01}, however, in several experiments, non-convex functions yield better results \cite{te98}. These improved results are due to properties gained by the potential function $\psi$ in exchange for convexity. One of these properties is the so-called edge-preserving regularization, which encourages smoothing within regions and discourages smoothing across boundaries, ensuring that in the reconstructed signal or image, the edges between homogeneous zones remain sharp and preserved.
In the literature \cite{dela98, ge92, nik07,te98}, edge-preserving functions are defined by the following assumptions, which ensure the desired edge-preserving regularization:
\begin{assumption}\label{ass:phi1}
$\;$ \newline \vspace{-.3cm}
\begin{enumerate}
\item $\psi'(t) \geq 0, \quad \forall t \geq 0$.
\item $\frac{\psi'(t)}{2t}$ is continuous and strictly decreasing on $[0, +\infty)$.
\item $\lim_{t \to +\infty} \frac{\psi'(t)}{2t} = 0$.
\item $\lim_{t \to 0^+} \frac{\psi'(t)}{2t} = M$, where $0 < M < +\infty$.
\end{enumerate}
\end{assumption}

While Assumption \ref{ass: gen} is for general regularization functions, Assumption \ref{ass:phi1} specifically applies to edge-preserving functions. It is possible to exploit this second set of assumptions to demonstrate that edge-preserving functions are, in fact, implicit concave.

\begin{theorem}\label{th: peic}
Let $\psi:\real\rightarrow \real_+$ be an edge preserving function satisfying Assumptions \ref{ass: gen} and   \ref{ass:phi1}. Then $\psi(t)$ is implicit concave, 
that is $\psi$ can be expressed as:
$$
\psi(t)=V(t^2)
$$
Where $V$ is  continuous, differentiable, and stricly concave and $\Phi=t^2$ is continuous and differentiable.
\end{theorem}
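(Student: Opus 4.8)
The plan is to exhibit the function $V$ explicitly and then verify the three required properties one at a time. Since $\Phi(t)=t^2$ maps $\real$ onto $[0,+\infty)$, I define $V$ on $[0,+\infty)$ by $V(u):=\psi(\sqrt{u})$. The evenness of $\psi$ (Assumption \ref{ass: gen}, item 2) immediately gives $V(t^2)=\psi(\sqrt{t^2})=\psi(|t|)=\psi(t)$ for every $t\in\real$, so the representation $\psi=V\circ\Phi$ holds; it then remains only to check that $V$ is continuous, differentiable and strictly concave, while $\Phi(t)=t^2$ is trivially continuous and differentiable.

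Continuity and differentiability on $(0,+\infty)$ follow at once from the chain rule, since $\psi\in C^1$ (Assumption \ref{ass: gen}, item 3) and $u\mapsto\sqrt{u}$ is smooth there; explicitly $V'(u)=\frac{\psi'(\sqrt{u})}{2\sqrt{u}}$ for $u>0$, and substituting $t=\sqrt{u}$ reveals that $V'(u)=\frac{\psi'(t)}{2t}$, precisely the quantity controlled by Assumption \ref{ass:phi1}. The delicate point, and the one I expect to be the main obstacle, is the behaviour at $u=0$, where the factor $1/\sqrt{u}$ threatens a singularity. Here Assumption \ref{ass:phi1}, item 4, rescues the situation by guaranteeing $\lim_{t\to 0^+}\frac{\psi'(t)}{2t}=M$ with $0<M<+\infty$. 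I would combine this finite limit with the continuity of $V$ at $0$ (which holds because $V(0)=\psi(0)=0$ and $\psi(\sqrt u)\to 0$) and apply the mean value theorem to the difference quotient $\frac{V(u)-V(0)}{u}=\frac{\psi(\sqrt u)}{u}$: for each $u>0$ this equals $V'(\xi)$ for some $\xi\in(0,u)$, and letting $u\to 0^+$ forces $\xi\to 0^+$, so the quotient tends to $M$. Hence $V$ is differentiable at $0$ with $V'(0)=M$, and since also $V'(u)\to M$ as $u\to 0^+$, the derivative $V'$ is continuous on all of $[0,+\infty)$, giving $V\in C^1$.

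For strict concavity I would use the characterization that a $C^1$ function on an interval is strictly concave exactly when its derivative is strictly decreasing. Writing $w(t):=\frac{\psi'(t)}{2t}$ (with $w(0):=M$), Assumption \ref{ass:phi1}, item 2, asserts that $w$ is strictly decreasing on $[0,+\infty)$. Since $V'(u)=w(\sqrt u)$ is the composition of the strictly decreasing map $w$ with the strictly increasing map $u\mapsto\sqrt u$, it is strictly decreasing in $u$, which yields the strict concavity of $V$. Notice this argument never differentiates twice, so the bare $C^1$ hypothesis on $\psi$ is enough; the only real care is needed at the origin, where items 2 and 4 of Assumption \ref{ass:phi1} are exactly what tame the division by $t$. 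Finally, if one wishes $V$ to be defined on all of $\real$ as in Definition \ref{def: start}, it may be extended to $u<0$ by any $C^1$ strictly concave continuation matching $V$ and $V'$ at $0$, since the composition only ever evaluates $V$ on $\mathcal{R}(\Phi)=[0,+\infty)$.
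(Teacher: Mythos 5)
Your proof is correct and follows essentially the same route as the paper: both define $V(y)=\psi(\sqrt{y})$, use evenness to get $\psi(t)=V(t^2)$, and derive strict concavity from the strict monotonicity of $V'(u)=\frac{\psi'(\sqrt{u})}{2\sqrt{u}}$ guaranteed by Assumption \ref{ass:phi1}. Your treatment is more careful than the paper's (which asserts concavity and differentiability as ``easy to notice''), in particular the mean-value-theorem argument establishing $V'(0)=M$ and the remark on extending $V$ beyond $[0,+\infty)$, but the underlying decomposition and key idea are identical.
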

\begin{proof}
It is easy to notice that a consequence of Assumption \ref{ass: gen} and the second of Assumption  \ref{ass:phi1}  that $\psi(\sqrt{\tau})$ is concave. Therefore, it is possible to choose:
$$
V(y)=\psi(\sqrt{y}).
$$
If we set $y=t^2$, we have:
$$
\psi(t)=V(\Phi(t)), \quad\Phi(t)=t^2,
$$
where $V$ is not only concave but also continuous and differentiable for Assumption \ref{ass: gen} and \ref{ass:phi1},  and $\Phi(t)=t^2$ is continuous and differentiable.
\end{proof}

Therefore, in (\ref{eq: primep}), the function
$$
\psi(\|G_i z \|) = V(\Phi_i(z)), \quad \Phi_i(z) = \| G_i z \|^2,
$$
is implicit concave for every $i$. The augmented problem associated with Problem (\ref{eq: primep}) is:
\begin{equation}\label{eq: lep}
\min_{x,\sigma} L_{ep}(x,\sigma) = \|A x - b \|^2 + \beta \left( \sum_{i=1}^m \sigma_i \|G_i x \|^2 - V^*(\sigma_i) \right),
\end{equation}
where $V^*$ is the conjugate function of $V$. 
The first observation is that the function $L_{ep}$ is convex in $\sigma \in \mathbb{R}^m$ and quadratic in $x \in \mathbb{R}^n$. For this reason, Problem (\ref{eq: lep}) is known as a half-quadratic regularization, as introduced in the seminal paper \cite{ge92}. 

Having established that $\psi$ is implicit concave, all the results from Section \ref{prop} apply to Problem (\ref{eq: lep}). In particular, the minimization of $f_{ep}$ and $L_{ep}$ are equivalent, as a result of Theorems \ref{th: corr} and \ref{the: min}. Furthermore, Problem (\ref{eq: primep}) is bounded below, as a consequence of Assumption \ref{ass:phi1}, and therefore Problem (\ref{eq: lep}) is also bounded below, as a result of Theorem \ref{th: bound}.

Another interesting characteristic of half-quadratic functions comes from examining the subset $D^+ \subseteq D$ of the augmented space where the function $L_{ep}$ is convex in the variables $x$. By inspecting Problem (\ref{eq: lep}), we observe that $L_{ep}$ is convex in $x$ if the following matrix is positive semidefinite:
$$
A^T A + \beta \sum_{i=1}^m \sigma_i G_i^T G_i.
$$
A sufficient condition for this is:
\begin{equation}\label{eq: poss}
\sigma_i \geq 0 \quad \text{for } i = 1, \dots, m.
\end{equation}
In the next theorem, we show that this condition holds for the entire set $D$ under Assumption \ref{ass:phi1}:
\begin{theorem}\label{the: pos}
Let $\psi: \mathbb{R} \to \mathbb{R}_+$ be an edge-preserving potential function such that Assumption \ref{ass:phi1} holds, and let $\sigma$ be the augmented variable associated with it. Then $\sigma \geq 0$.
\end{theorem}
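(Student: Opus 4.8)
The plan is to exploit the explicit characterization of the augmented variable furnished by Theorem~\ref{th: corr}. Recall from that theorem (specifically equation~(\ref{eq: temp})) that at a corresponding stationary point the augmented variable satisfies $\bar\sigma=\nabla V(\Phi(\bar x))$; applied componentwise to Problem~(\ref{eq: lep}), where $\Phi_i(x)=\|G_i x\|^2$, this reads $\sigma_i=V'\!\left(\|G_i x\|^2\right)$. More generally the feasible set $D=\mathcal{R}(\nabla V)$ consists precisely of the values attained by $V'$, so it suffices to show that $V'\geq 0$ throughout its domain.

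First I would compute $V'$ from the representation $V(y)=\psi(\sqrt{y})$ established in Theorem~\ref{th: peic}. Writing $t=\sqrt{y}$ and applying the chain rule gives
\[
V'(y)=\frac{\psi'(\sqrt{y})}{2\sqrt{y}}=\frac{\psi'(t)}{2t},
\]
so that, after the substitution $t=\|G_i x\|$, the augmented variable becomes exactly $\sigma_i=\psi'(t)/(2t)$ --- the very quotient appearing in Assumption~\ref{ass:phi1}. The non-negativity claim then reduces to a direct reading of that assumption.

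For every $t>0$ the conclusion is immediate: item~1 of Assumption~\ref{ass:phi1} gives $\psi'(t)\geq 0$, hence $\sigma_i=\psi'(t)/(2t)\geq 0$. The only delicate point --- and the step I expect to require the most care --- is the boundary value $t=0$, where the quotient $\psi'(0)/0$ is an indeterminate $0/0$ form and $\sigma_i$ must instead be defined through its continuous extension. Here items~2 and~4 of Assumption~\ref{ass:phi1} do the work: item~2 guarantees that $\psi'(t)/(2t)$ is continuous on $[0,+\infty)$, and item~4 identifies its limiting value at the origin as $M$ with $0<M<+\infty$. Consequently $\sigma_i\to M>0$ as $t\to 0^+$, so the extended value is strictly positive. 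Combining the two regimes shows $\sigma_i\geq 0$ for every admissible $t\geq 0$, and therefore $\sigma\geq 0$ on the whole of $D$, which is the assertion of the theorem.
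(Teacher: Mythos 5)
Your proof is correct and follows essentially the same route as the paper: both arguments reduce the claim to the chain-rule identity linking $\psi'$ and $V'$ and then invoke the fact that the domain of $V^*$ is $\mathcal{R}(\nabla V)$. The one substantive difference is that you read the identity in the opposite direction, writing $V'(y)=\psi'(\sqrt{y})/(2\sqrt{y})$ so that $\sigma_i$ is literally the quotient $\psi'(t)/(2t)$ governed by Assumption~\ref{ass:phi1}, and this lets you treat the boundary case $t=0$ explicitly via items~2 and~4. That is a genuine (if small) improvement: the paper's step asserting that $2t\,V'(\Phi(t))\ge 0$ holds ``if and only if'' $V'(\Phi(t))\ge 0$ is vacuous at $t=0$, where the product vanishes regardless of the sign of $V'$, whereas your continuous-extension argument pins the value at the origin to $M>0$. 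Both proofs deliver the same conclusion, but yours closes that gap.
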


\begin{proof}
From Theorem \ref{th: peic} we know that:
$$
\psi(t) = V(t^2),
$$
and from Assumption \ref{ass:phi1}.1
 that the derivative  of $\psi$ must be non-negative, so  we can write:
\begin{equation}\label{eq: dvt}
\psi'(t)=2 t V'(\Phi(t))\ge0.
\end{equation}
In our application we have that $t=\|G_i x \|\ge 0$, therefore the (\ref{eq: dvt}) is verified if and only if:
\begin{equation}\label{eq: posdev}
V'(\Phi(t))\ge0.
\end{equation}
Finally from the theory of conjugate functions, it is known that the domain of the conjugate function $V^*$ is the range of the derivative of $V$, that from  (\ref{eq: posdev})  is a subset of $\real_+$, proving the theorem.
\end{proof}

From this theorem, it is proven that the function $L$ is convex in the variables $x$ for fixed $\sigma$, and convex in $\sigma$ for fixed $x$, resulting in a biconvex \cite{bicon} optimization problem over its entire domain. Biconvex functions are a well-known extension of convex functions, from which they inherit many favorable properties. 
As a matter of fact, a simple and efficient way to search for a minimum is to use a block coordinate strategy, which alternately solves the two convex subproblems until a solution is found. Furthermore, results in the literature prove the convergence of certain algorithms to the global solutions of biconvex functions \cite{flo90,flo99}. Further information and properties of biconvex functions can be found in the survey \cite{bicon}.

In Table \ref{tab:dualdomain} we report the most used edge preserving functions in literature \cite{nik07,te98}. In it possible to notice that for each  edge-preserving function the corresponding $V(y)$, with $y = \Phi(t) = t^2$ and $t = \|G x\|$ is concave, and the domain $D$ of the augmented variables is a subset of  $\real_+$, showing that both Theorems \ref{th: peic}  and \ref{the: pos} are indeed verified in practice.

From the discussion in this section, it emerges that Problems (\ref{eq: primep}) and (\ref{eq: lep}) can be considered {\it equivalent} in the sense that a solution to Problem (\ref{eq: primep}) is also a solution to Problem (\ref{eq: lep}), and vice versa. However, solving Problem (\ref{eq: lep}) is more advantageous from a computational perspective. As a matter of fact, exploiting the biconvex nature of $L_{ep}$ and solving a sequence of simple convex subproblems generally results in less computational time and better solutions than solving a non-convex optimization problem. 
The only drawback is the increased number of variables. However, because the subproblems are simpler and solved in sequence, solving Problem (\ref{eq: lep}) does not typically require more memory resources than solving Problem (\ref{eq: primep}).

\begin{table}[h]
\caption{Examples of edge-preserving potential functions and their Fenchel conjugates. Here $t = \|G x\|$ and $y = \Phi(t) = t^2$.}
\centering
\begin{tabular}{|c|c|c|c|c|}
\hline
$\psi(t)$ & $V(y)$ & $\nabla V(y)$ & $D$ & $V^*(\sigma)$ \\
\hline
$1 - \exp(- t^2)$ & $1 - \exp(- y)$ & $\exp(-y)$ & $\mathbb{R}_+$ & $-\sigma(\log \sigma - 1)$ \\
\hline
$\dfrac{ t^2}{1 +  t^2}$ & $\dfrac{ y}{1 +  y}$ & $\dfrac{1}{(1 + y)^2}$ & $[1, +\infty)$ & $\dfrac{(\sigma - \sqrt{\sigma})^2}{\sigma}$ \\
\hline
$\log( t^2)$ & $\log( y)$ & $\dfrac{1}{y}$ & $\mathbb{R}_{++}$ & $1 + \log \sigma$ \\
\hline
$\begin{cases}
\sin( t^2) & \text{if } 0 \le t \le \sqrt{\frac{\pi}{2}} \\
1 & \text{if } t > \sqrt{\frac{\pi}{2}}
\end{cases}$ &
$\sin( y)$ &
$\cos(y)$ &
$[0, 1]$ &
$\begin{array}{c}
\sigma \arccos(\sigma) - \\
\sin(\arccos(\sigma)) - 1
\end{array}$ \\
\hline
\end{tabular}

\label{tab:dualdomain}
\end{table}

\section{Conclusions}

In this work, we introduce a new class of functions, referred to as implicit concave functions, for which it is possible to obtain an augmented reformulation through the use of the Fenchel transformation. The minimization problem for both the original and the augmented functions is equivalent, in the sense that each minimum point of the original function corresponds to a minimum point in the augmented function, and vice versa.
However, minimizing the augmented function presents distinct advantages. Specifically, the augmented function is convex with respect to the newly introduced variables and is often simpler to minimize with respect to the original variables. This makes it particularly well-suited for block coordinate descent algorithms. Moreover, if there exists a subset of the augmented space where the augmented function is convex in the original variables, the theory and algorithms of biconvex optimization can be applied.

Implicit concave functions are already employed in machine learning, particularly in the form of half-quadratic regularization for edge-preserving functions. Using the theory developed in this paper, we demonstrate that half-quadratic regularization has a one-to-one correspondence with the stationary points of the original problem. Furthermore, we show that it is biconvex across its entire domain and bounded from below, indicating that solving the augmented problem offers almost no drawbacks compared to the original problem.

For future research, it would be of interest to explore other non-convex optimization problems that can be expressed in implicit concave form and to develop specialized algorithms for their corresponding augmented functions.

\section*{Bibliography}
\bibliographystyle{unsrt}
\bibliography{scholar}

\begin{thebibliography}{10}

\bibitem{l22}
Vittorio Latorre.
\newblock Duality and hidden convexity for a class of non-convex functions.
\newblock {\em Submitted to the Journal of Global Optimization}, 2025.

\bibitem{snl19}
Vittorio Latorre and David~Yang Gao.
\newblock Efficient deterministic algorithm for huge-sized noisy sensor
  localization problems via canonical duality theory.
\newblock {\em IEEE transactions on cybernetics}, 2019.

\bibitem{ls14}
Vittorio Latorre and Simone Sagratella.
\newblock A canonical duality approach for the solution of affine
  quasi-variational inequalities.
\newblock {\em J. Global Optimization}, pages 1--17, 2014.

\bibitem{conc95}
Harold~P Benson.
\newblock Concave minimization: theory, applications and algorithms.
\newblock In {\em Handbook of global optimization}, pages 43--148. Springer,
  1995.

\bibitem{conc00}
Pierre Apkarian and Hoang~Duong Tuan.
\newblock Robust control via concave minimization local and global algorithms.
\newblock {\em IEEE Transactions on Automatic Control}, 45(2):299--305, 2000.

\bibitem{conc98}
Paul~S Bradley and Olvi~L Mangasarian.
\newblock Feature selection via concave minimization and support vector
  machines.
\newblock In {\em ICML}, volume~98, pages 82--90. Citeseer, 1998.

\bibitem{conc18}
Manlio Gaudioso, Giovanni Giallombardo, and Giovanna Miglionico.
\newblock Minimizing piecewise-concave functions over polyhedra.
\newblock {\em Mathematics of Operations Research}, 43(2):580--597, 2018.

\bibitem{bicon}
Jochen Gorski, Frank Pfeuffer, and Kathrin Klamroth.
\newblock Biconvex sets and optimization with biconvex functions: a survey and
  extensions.
\newblock {\em Mathematical methods of operations research}, 66(3):373--407,
  2007.

\bibitem{gs99}
Luigi Grippof and Marco Sciandrone.
\newblock Globally convergent block-coordinate techniques for unconstrained
  optimization.
\newblock {\em Optimization methods and software}, 10(4):587--637, 1999.

\bibitem{ber03}
Dimitri Bertsekas and John Tsitsiklis.
\newblock {\em Parallel and distributed computation: numerical methods}.
\newblock Athena Scientific, 2015.

\bibitem{ge92}
Donald Geman and George Reynolds.
\newblock Constrained restoration and the recovery of discontinuities.
\newblock {\em IEEE Transactions on pattern analysis and machine intelligence},
  14(3):367--383, 1992.

\bibitem{nik07}
Mila Nikolova and Raymond~H Chan.
\newblock The equivalence of half-quadratic minimization and the gradient
  linearization iteration.
\newblock {\em IEEE Transactions on Image Processing}, 16(6):1623--1627, 2007.

\bibitem{te98}
Sylvie Teboul, Laure Blanc-Feraud, Gilles Aubert, and Michel Barlaud.
\newblock Variational approach for edge-preserving regularization using coupled
  pdes.
\newblock {\em IEEE transactions on Image Processing}, 7(3):387--397, 1998.

\bibitem{hq21}
Kai Zhang, Yawei Li, Wangmeng Zuo, Lei Zhang, Luc Van~Gool, and Radu Timofte.
\newblock Plug-and-play image restoration with deep denoiser prior.
\newblock {\em IEEE Transactions on Pattern Analysis and Machine Intelligence},
  2021.

\bibitem{ep_sur}
Paras Jain and Vipin Tyagi.
\newblock A survey of edge-preserving image denoising methods.
\newblock {\em Information Systems Frontiers}, 18(1):159--170, 2016.

\bibitem{unr241}
Mouna Gharbi, Emilie Chouzenoux, and Jean-Christophe Pesquet.
\newblock An unrolled half-quadratic approach for sparse signal recovery in
  spectroscopy.
\newblock {\em Signal Processing}, 218:109369, 2024.

\bibitem{unr24}
Jiabing Sun, Changliang Wang, Lei Guo, Yongxiang Fang, Jiawen Huang, and
  Bensheng Qiu.
\newblock An unrolled neural network for accelerated dynamic mri based on
  second-order half-quadratic splitting model.
\newblock {\em Magnetic Resonance Imaging}, 113:110218, 2024.

\bibitem{rock09}
R~Tyrrell Rockafellar and Roger J-B Wets.
\newblock {\em Variational analysis}, volume 317.
\newblock Springer Science \& Business Media, 2009.

\bibitem{gorni91}
Gianluca Gorni.
\newblock Conjugation and second-order properties of convex functions.
\newblock {\em Journal of Mathematical Analysis and Applications},
  158(2):293--315, 1991.

\bibitem{det97}
Pierre Charbonnier, Laure Blanc-F{\'e}raud, Gilles Aubert, and Michel Barlaud.
\newblock Deterministic edge-preserving regularization in computed imaging.
\newblock {\em IEEE Transactions on image processing}, 6(2):298--311, 1997.

\bibitem{cha97}
Antonin Chambolle and Pierre-Louis Lions.
\newblock Image recovery via total variation minimization and related problems.
\newblock {\em Numerische Mathematik}, 76(2):167--188, 1997.

\bibitem{ve01}
Luminita Vese.
\newblock A study in the bv space of a denoising—deblurring variational
  problem.
\newblock {\em Applied Mathematics and optimization}, 44(2):131--161, 2001.

\bibitem{dela98}
Alexander~H Delaney and Yoram Bresler.
\newblock Globally convergent edge-preserving regularized reconstruction: an
  application to limited-angle tomography.
\newblock {\em IEEE Transactions on image processing}, 7(2):204--221, 1998.

\bibitem{flo90}
CA~Floudas and V~Visweswaran.
\newblock A global optimization algorithm (gop) for certain classes of
  nonconvex nlpsÑi. theory.
\newblock {\em Computers \& chemical engineering}, 14(12):1397--1417, 1990.

\bibitem{flo99}
Christodoulos~A Floudas.
\newblock {\em Deterministic global optimization}, volume~37.
\newblock Springer Science \& Business Media, 1999.

\end{thebibliography}

\end{document}